\newtheorem{lemma}{Lemma}[section]
\newtheorem{thm}[lemma]{Theorem}
\newtheorem{prop}[lemma]{Proposition}
\newcommand{\finedimo}{{\hfill\hbox{$\square$}\vspace{2pt}}}
\newcommand\matH{{\mathbb{H}}}
\newcommand\matE{{\mathbb{E}}}
\newcommand\matR{{\mathbb{R}}}
\renewcommand{\hbar}{{\overline{h}}}
\newfont{\Got}{eufm10 scaled 1200}
\newcommand{\vtil}[1]{\widetilde{v}\left(#1\right)}
\newcommand{\Mtil}{\widetilde{M}}
\newcommand{\diff}{\,\textrm{d}}
\newcommand\calA{{\mathcal A}}
\begin{document}

\title{Notes on the peripheral volume\\ of hyperbolic $3$-manifolds}

\author{Carlo~\textsc{Petronio}\and Michele~\textsc{Tocchet}}

\maketitle

\begin{abstract}
\noindent We consider hyperbolic 3-manifolds with either non-empty compact geodesic boundary,
or some toric cusps, or both. For any such $M$ we analyze what portion of the volume of $M$
can be recovered by inserting in $M$ boundary collars and cusp neighbourhoods with disjoint embedded interiors.
Our main result is that this portion can only be maximal in some combinatorially extremal configurations.
The techniques we employ are \emph{very} elementary but the result is in our opinion of some interest.
\noindent MSC (2010):  57M50 (primary);  57M27 (secondary).
\end{abstract}

The issue of understanding volumes of hyperbolic $3$-manifolds has
been one of the central themes of geometric topology since the pioneering
work of Jorgensen and Thurston, who showed that the set of possible volumes is a well-ordered
subset of $\matR$ (see,~\emph{e.g.},~\cite{BePe}).
In particular, considerable energy has been devoted to
identifying the minima of the volume within given classes of manifolds,
and the following instances have by now been settled:
\begin{itemize}
\item The minimal-volume compact hyperbolic $3$-manifolds with non-empty geodesic
boundary were proved~\cite{KoMi} to be the
 $8$ manifolds that decompose into two truncated regular tetrahedra of
dihedral angle $\frac{\pi}6$ (these $8$ manifolds were first
described in~\cite{Fujii});
\item The minimal-volume hyperbolic $3$-manifolds with toric cusps (but no boundary)
were shown~\cite{CaoMe} to be the figure-eight knot complement and its sibling
(these manifolds were first described in~\cite{bibbia} and~\cite{CaHiWe});
\item The minimal-volume closed hyperbolic $3$-manifold was very recently proved~\cite{GaMeMi1, GaMeMi2, GaMeMi3, Mi}
to be the Weeks manifold
(first described in~\cite{CaHiWe} and~\cite{MaFo});
\item The two minimal-volume hyperbolic $3$-manifolds with two toric cusps (but no boundary)
were identified in~\cite{Agol}.
\end{itemize}
Other instances, however, remain open, as that of manifolds with one toric cusp
and one compact boundary component.

The general (and very roughly described)
idea of the papers quoted so far is to show that an upper or lower
bound on the volume implies constraints on the topology of a manifold.
Moreover, estimates on the volume of boundary collars or cusp neighbourhoods often
play an important role.
We also mention that the same ideas appear in other interesting articles,
such as~\cite{BriKa} and~\cite{DeBloSha}.
In this paper we address the following
natural question:
\begin{itemize}
\item \emph{Let $M$ be a hyperbolic $3$-dimensional manifold with either non-empty compact geodesic boundary,
or some toric cusps, or both. What is the optimal way of inserting in $M$
boundary collars and/or cusp neighbourhoods having disjoint embedded interiors?}
\end{itemize}
Optimality here is of course meant in the sense of volume maximization,
collars are defined using the distance, and cusp neighbourhoods are bounded
by horospherical cross-sections.
We will refer to a boundary collar or cusp neighbourhood using the collective term of
\emph{peripheral component}. Employing \emph{very} elementary techniques we will prove in this
paper that an optimal choice of the peripheral components necessarily occurs in
a combinatorially extremal configuration, a fact that we consider to be of some interest.

To state our main results we denote by $v$ the volume function and
we establish the convention that, when we mention distinct peripheral components, they
always have disjoint and embedded interiors. 
We then have:

\begin{prop}\label{2:periph:prop}
Let $M$ have two peripheral components $P_1$ and $P_2$. Then $v\left(P_1\right)+v\left(P_2\right)$ can have a local maximum
only if, up to reordering the indices, $P_1$ is chosen so that $v\left(P_1\right)$ is
maximal regardless of $P_2$, and then $P_2$ is chosen so that $v\left(P_2\right)$ is maximal
given $P_1$.
\end{prop}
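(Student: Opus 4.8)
The plan is to reduce the statement to an elementary fact about strictly convex functions on a convex planar polygon. First I would parametrize each peripheral component by its \emph{depth}: for a boundary collar $P_i$ one uses the distance $t_i\ge 0$ to the boundary component it collars, and for a cusp neighbourhood $P_i$ one uses the signed distance $t_i$ by which the bounding horospherical torus has been pushed towards the thick part, measured from some fixed reference torus. In both cases $P_i=P_i(t_i)$, the family $(P_i(t_i))_{t_i}$ is increasing, and embeddedness is monotone in $t_i$, so there is a threshold $\widehat t_i$ with $P_i(t_i)$ embedded exactly for $t_i\le\widehat t_i$. Moreover a standard local computation in $\matH^3$ (parallel surfaces of a totally geodesic surface have area multiplied by $\cosh^2$ of the distance; a horospherical torus has Euclidean area, hence also cross-sectional area, multiplied by $e^{2t}$) shows that $t_i\mapsto v\bigl(P_i(t_i)\bigr)$ is smooth, strictly increasing, with second derivative $>0$ wherever $v(P_i(t_i))>0$; in particular it is strictly convex on the relevant range.

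Next I would prove the one genuinely geometric step. Writing $\rho$ for the distance in $M$ between the two reference surfaces ($\partial_iM$ for a collar, the reference horotorus for a cusp), I claim that for $0\le t_i\le\widehat t_i$ one has
\[
\mathrm{dist}_M\bigl(P_1(t_1),P_2(t_2)\bigr)=\rho-t_1-t_2\qquad\text{whenever the right-hand side is nonnegative,}
\]
so that $P_1(t_1)$ and $P_2(t_2)$ have disjoint embedded interiors precisely when $t_1\le\widehat t_1$, $t_2\le\widehat t_2$ and $t_1+t_2\le\rho$. The inequality ``$\ge$'' follows by prolonging any arc joining $P_1(t_1)$ to $P_2(t_2)$ back to the reference surfaces, using that embeddedness of $P_i(t_i)$ makes the inward normal exponential map injective up to depth $t_i$; the inequality ``$\le$'' follows by walking in from the reference surfaces along the common perpendicular geodesic arc realizing $\rho$. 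Consequently the set $D$ of admissible pairs $(t_1,t_2)$ is the convex polygon $\{t_1\le\widehat t_1,\ t_2\le\widehat t_2,\ t_1+t_2\le\rho\}$ (intersected with the quadrant $t_i\ge0$ in the collar case), and the function to be maximised is $f(t_1,t_2)=v\bigl(P_1(t_1)\bigr)+v\bigl(P_2(t_2)\bigr)$, strictly increasing in each variable and, by the convexity recalled above, strictly convex along every line of slope $-1$.

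The conclusion is then immediate. Since $f$ is strictly increasing in each variable and $D$ is convex, a point of $D$ that is dominated coordinatewise by another point of $D$ cannot be a local maximum: moving along the segment towards the dominating point strictly increases $f$. Hence any local maximum lies on the coordinatewise-maximal part of $\partial D$, which one checks is either the single vertex $(\widehat t_1,\widehat t_2)$ (when $\rho\ge\widehat t_1+\widehat t_2$) or the segment on the line $t_1+t_2=\rho$ with endpoints $\bigl(\widehat t_1,\rho-\widehat t_1\bigr)$ and $\bigl(\rho-\widehat t_2,\widehat t_2\bigr)$. On that segment $f$ restricts to $t_1\mapsto v\bigl(P_1(t_1)\bigr)+v\bigl(P_2(\rho-t_1)\bigr)$, whose second derivative $v''\bigl(P_1(t_1)\bigr)+v''\bigl(P_2(\rho-t_1)\bigr)$ is $>0$; a strictly convex function on a segment has no local maximum in its interior, so a local maximum of $f$ on $D$ can only be one of the two endpoints, or the vertex $(\widehat t_1,\widehat t_2)$. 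Each of these is exactly the configuration in the statement: up to exchanging the indices, $t_1$ is as large as possible regardless of $P_2$ and then $t_2$ is as large as $P_1$ allows.

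I expect the only non-formal point to be the displayed distance formula, i.e.\ the linearity of the mutual-embeddedness constraint; everything else is bookkeeping plus one-variable calculus of strictly convex functions. The formula must be verified uniformly for the collar--collar, collar--cusp and cusp--cusp cases, with care about which reference surface each depth is measured from and about the injectivity radii of the normal exponential maps. A few degenerate situations (for instance $\widehat t_i>\rho$, so that $P_i$ at its absolute maximal size leaves no room for the other component, or a cusp whose depth parameter is negative so that $v\to0$) need a brief separate remark, but they do not affect the conclusion.
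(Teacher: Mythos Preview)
Your argument is correct and rests on the same core fact as the paper's proof: along a tangency-preserving one-parameter deformation the total peripheral volume is strictly convex, so there is no interior local maximum. The paper carries this out by a case split---cusp/cusp, collar/cusp, collar/collar---in each case invoking Proposition~\ref{variation:prop} to write $V$ explicitly as a function of one variable and checking convexity directly. You instead introduce a uniform \emph{depth} parameter $t_i$ (distance for collars, signed horospherical displacement for cusps), under which the mutual-disjointness constraint becomes the single linear inequality $t_1+t_2\le\rho$ in all three cases; the admissible region is then a convex polygon, $f$ is coordinatewise increasing and strictly convex along every slope-$(-1)$ line, and the conclusion follows uniformly. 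This repackaging is tidier and makes the underlying geometry transparent; the displayed distance formula you single out is precisely the content of Proposition~\ref{variation:prop} rewritten in your depth coordinates. The paper's case-by-case version has the minor advantage of producing the explicit volume formulas, which it reuses in the proof of Theorem~\ref{3:periph:thm}.
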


(When both peripheral components are cusps, or both are collars of boundary components having the
same genus, we will also show that
one must maximize first the component that individually can be made bigger than the other one.)

For the next result we will need to refer to a certain modified volume
$\vtil{P}$ of a peripheral component $P$, that coincides with $v\left(P\right)$ when
$P$ is a cusp neighbourhood, and will be defined below when $P$ is a boundary collar.

\begin{thm}\label{3:periph:thm}
Let $M$ have three peripheral components $P_1$, $P_2$, and $P_3$. Then $v\left(P_1\right)+v\left(P_2\right)+v\left(P_3\right)$ can have a local maximum
only in one of the following configurations:
\begin{itemize}
\item Up to reordering the indices, first $P_1$ is chosen so that $v\left(P_1\right)$ is
maximal regardless of $P_2$ and $P_3$, next $P_2$ is chosen so that $v\left(P_2\right)$ is maximal
given $P_1$, and last $P_3$ is chosen so that $v\left(P_3\right)$ is maximal
given $P_1,P_2$;
\item Each of $P_1,P_2,P_3$ is tangent to the other two, and
the modified volumes $\vtil{P_1},\vtil{P_2},\vtil{P_3}$ satisfy the strict triangular inequalities.
\end{itemize}
Moreover, if in the latter case no $P_j$ is individually maximal, then
the configuration indeed gives a local maximum for $v\left(P_1\right)+v\left(P_2\right)+v\left(P_3\right)$.
\end{thm}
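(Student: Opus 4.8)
The plan is to reduce the statement to a finite-dimensional Kuhn--Tucker computation. Fix disjoint reference positions for $P_1,P_2,P_3$, small enough that the prescribed mutually tangent configuration is attained after enlarging each $P_j$ by a positive amount, and parametrize $P_j$ near that configuration by the hyperbolic distance $t_j>0$ it has been enlarged (the collar radius when $P_j$ is a boundary collar, the distance the bounding horotorus has been pushed when $P_j$ is a cusp neighbourhood). Then $v(P_j)$ becomes a smooth, strictly increasing function $v_j(t_j)$ with $v_j'(t_j)=2\,\vtil{P_j}$, and --- this is the one genuinely geometric input --- the requirement that the interiors of $P_i$ and $P_j$ be disjoint reads, near the configuration, as the affine inequality $g_{ij}:=t_i+t_j-d_{ij}\le 0$, where $d_{ij}>0$ is the distance between $P_i$ and $P_j$ in their reference positions (each enlargement shortens the common perpendicular realizing $d_{ij}$ by exactly the distance travelled). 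Write $\mathbf{t}^*=(t_1^*,t_2^*,t_3^*)$ for the tangent configuration, so that $t_i^*+t_j^*=d_{ij}$ for all $i<j$; since no $P_j$ is individually maximal, the self-tangency bound on $t_j$ is not attained at $\mathbf{t}^*$, so the only constraints active there are $g_{12},g_{13},g_{23}$, and a neighbourhood of $\mathbf{t}^*$ in the set of admissible triples is contained in the simplicial cone $\{g_{12}\le 0,\ g_{13}\le 0,\ g_{23}\le 0\}$ with vertex $\mathbf{t}^*$.

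Next I would compute the multipliers at this vertex. The objective $f:=v_1(t_1)+v_2(t_2)+v_3(t_3)$ satisfies $\nabla f(\mathbf{t}^*)=\bigl(v_1'(t_1^*),v_2'(t_2^*),v_3'(t_3^*)\bigr)=2\bigl(\vtil{P_1},\vtil{P_2},\vtil{P_3}\bigr)$ (here and below $\vtil{P_j}$ is evaluated at $\mathbf{t}^*$), while $\nabla g_{12}=(1,1,0)$, $\nabla g_{13}=(1,0,1)$, $\nabla g_{23}=(0,1,1)$ are linearly independent. Hence there are unique real numbers $\mu_{12},\mu_{13},\mu_{23}$ with $\nabla f(\mathbf{t}^*)=\mu_{12}\nabla g_{12}+\mu_{13}\nabla g_{13}+\mu_{23}\nabla g_{23}$, namely $\mu_{ij}=\vtil{P_i}+\vtil{P_j}-\vtil{P_k}$ where $\{i,j,k\}=\{1,2,3\}$; and the \emph{strict} triangular inequalities among $\vtil{P_1},\vtil{P_2},\vtil{P_3}$ say exactly that $\mu_{12},\mu_{13},\mu_{23}>0$.

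To conclude, pass to the slack coordinates $s_{ij}:=-g_{ij}=d_{ij}-t_i-t_j$: since the $\nabla g_{ij}$ are linearly independent, $(t_1,t_2,t_3)\mapsto(s_{12},s_{13},s_{23})$ is an affine isomorphism sending $\mathbf{t}^*$ to the origin and the admissible set near $\mathbf{t}^*$ into the octant $\{s_{12}\ge0,\ s_{13}\ge0,\ s_{23}\ge0\}$. A short chain-rule computation, dual to the multiplier relations above, gives $\partial f/\partial s_{ij}\,(0)=-\mu_{ij}$, hence in these coordinates
\[
f(s)-f(0)=-\mu_{12}s_{12}-\mu_{13}s_{13}-\mu_{23}s_{23}+O\bigl(\|s\|^2\bigr)\le -\bigl(\min_{i<j}\mu_{ij}\bigr)\,\|s\|_1+O\bigl(\|s\|_1^2\bigr),
\]
which is strictly negative for every admissible $s\ne 0$ with $\|s\|_1$ small. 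Thus $\mathbf{t}^*$ is a strict local maximum of $v(P_1)+v(P_2)+v(P_3)$. I expect the only real obstacle to lie in the first paragraph: checking that in the chosen parameters the disjointness constraints are genuinely affine of the asserted form, and that the hypothesis that no $P_j$ be individually maximal indeed rules out any further active constraint near $\mathbf{t}^*$. Once that is in place the remainder is a routine first-order argument; note in particular that the convexity of the $v_j$, hence of $f$, plays no role, the strictly negative linear term alone dominating the quadratic remainder for $s$ small.
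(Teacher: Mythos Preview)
Your argument correctly establishes the ``Moreover'' clause --- that the mutually tangent configuration with strict triangular inequalities and no individually maximal $P_j$ is a strict local maximum --- and does so by a clean Kuhn--Tucker computation that avoids the paper's four-way case split according to the types of the $P_j$. The key observation that $v_j'(t_j)=2\,\vtil{P_j}$ uniformly in both the cusp and collar cases is exactly what makes the modified volume the right quantity, and your unified treatment is arguably more conceptual than the paper's explicit case analysis.

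However, you have only proved part of the theorem. The first sentence asserts that a local maximum can occur \emph{only} in one of the two listed configurations, and you do not address this necessity direction at all. Concretely, two things are missing. First, you must rule out configurations in which not all three pairwise tangencies are present and no $P_j$ is individually maximal; the paper handles this by invoking Proposition~\ref{general:prop} (the tangency graph is then a tree) together with the two-component argument of Proposition~\ref{2:periph:prop}. Second, within the mutually tangent configuration you must show that if some triangular inequality fails to be strict --- say $\mu_{ij}=\vtil{P_i}+\vtil{P_j}-\vtil{P_k}=0$ --- then the point is \emph{not} a local maximum. Your first-order expansion $f(s)-f(0)=-\sum\mu_{ij}s_{ij}+O(\|s\|^2)$ is silent in the direction $s_{ij}>0$, $s_{ik}=s_{jk}=0$ when $\mu_{ij}=0$; one needs the second-order term there, and it is \emph{positive} because the $v_j$ are strictly convex. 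This is precisely how the paper disposes of the borderline case (``$V''_{\pm}>0$''), so your closing remark that convexity plays no role is incorrect for the full statement: it is essential for the necessity direction at the boundary of the triangular-inequality region.
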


Proposition~\ref{2:periph:prop} and Theorem~\ref{3:periph:thm} of course have similar flavours, since they
both state that a local maximum of the peripheral volume is attained at a combinatorially non-generic
configuration, namely one involving
individual maximality of one peripheral component or a cycle of tangencies.
A punctual analysis similar to that given by Theorem~\ref{3:periph:thm} is perhaps possible
also for four or more peripheral components, but we will refrain from carrying it out. We confine
ourselves to the following general result (a more refined version of which will be stated and proved below):

\begin{prop}\label{general:prop}
Let $M$ have $n$ peripheral components $P_1,\ldots,P_n$ such that:
\begin{itemize}
\item No $P_j$ is such that $v(P_j)$ is maximal regardless of the other $P_i$'s;
\item There are fewer than $n$ tangencies between different $P_j$'s.
\end{itemize}
Then the configuration cannot be a local maximum
for $v\left(P_1\right)+\ldots+v\left(P_n\right)$.
\end{prop}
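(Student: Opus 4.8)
The plan is to turn the two hypotheses into an explicit admissible deformation of the given configuration that strictly increases the total volume. I parametrize each $P_j$ by the signed hyperbolic distance $s_j$ by which its bounding surface is pushed outward, so that $s_j>0$ enlarges $P_j$, $s_j<0$ shrinks it, and the given configuration corresponds to $s=(s_1,\dots,s_n)=0$. Two elementary facts set things up. First, writing $v_j(s_j)$ for the volume of $P_j$ as a function of $s_j$, one has $v_j(s)=v(P_j)\,\ee^{2s}$ when $P_j$ is a cusp neighbourhood and $v_j(s)=\tfrac12 A_j\bigl((r_j+s)+\sinh(r_j+s)\cosh(r_j+s)\bigr)$ when $P_j$ is a collar of present width $r_j>0$ of a boundary component of area $A_j$; in both cases $v_j$ is smooth, strictly increasing, and \emph{strictly convex} near $s_j=0$. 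Second, since horoballs and metric neighbourhoods of totally geodesic planes are convex and realize their mutual distances along common perpendiculars, if $P_i$ and $P_j$ are tangent then the deformed components $P_i(s_i)$ and $P_j(s_j)$ have disjoint interiors \emph{exactly} when $s_i+s_j\le 0$, with no higher-order correction (as one checks directly in the upper half-space model). Finally, since no $P_j$ is individually maximal, for $|s_j|$ small $P_j(s_j)$ never meets itself, and a pair of components that is not tangent stays at positive distance; hence near $s=0$ the admissible deformations are precisely those with $s_i+s_j\le0$ for every tangency $ij$.

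I next record the combinatorics. Let $G$ be the graph on $\{1,\dots,n\}$ having an edge between $i$ and $j$ exactly when $P_i$ and $P_j$ are tangent; by hypothesis $G$ has fewer than $n$ edges. If every connected component of $G$ contained a cycle, then a component on $k$ vertices would have at least $k$ edges and $G$ would have at least $n$ edges; hence some connected component $T$ of $G$ is a tree. Let $W$ be its vertex set and $W=A\sqcup B$ the bipartition of $T$ into its two colour classes. I deform only the components indexed by $W$: for $t\in\matR$ small, put $s_j=t$ for $j\in A$, $s_j=-t$ for $j\in B$, and $s_j=0$ otherwise. Every tangency $ij$ lying inside $T$ joins $A$ to $B$, so $s_i+s_j=0$ along it; every other tangency has both endpoints outside $W$ (because $W$ is a connected component), so there too $s_i+s_j=0$; and two indices of $W$ not joined by an edge of $T$ are non-tangent, hence remain disjoint for $|t|$ small. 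Thus the deformation is admissible for all sufficiently small $|t|$.

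It remains to compute. Put $g(t)=\sum_j v_j(s_j)=\sum_{j\in A}v_j(t)+\sum_{j\in B}v_j(-t)+\mathrm{const}$. If $g'(0)=\sum_{j\in A}v_j'(0)-\sum_{j\in B}v_j'(0)$ is nonzero, then choosing the sign of $t$ suitably gives $g(t)>g(0)$, so the configuration is not a local maximum. If instead $g'(0)=0$, then $g''(0)=\sum_{j\in A}v_j''(0)+\sum_{j\in B}v_j''(0)>0$ by the strict convexity of the $v_j$, so $g(t)>g(0)$ for every small $t\ne 0$. In either case $v(P_1)+\dots+v(P_n)$ has no local maximum at the given configuration.

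The only point requiring real care, as opposed to routine verification, is the second-order term: when $T$ contains a path $i-j-k$ with $v_j'(0)=v_i'(0)+v_k'(0)$ the first-order variation along the bipartition direction vanishes, and it is exactly the strict convexity of $s\mapsto v_j(s)$ that then forces the total volume to go up. This is also where the hypothesis ``no $P_j$ individually maximal'' is used in an essential way: it is precisely what allows each $s_j$ to move in both directions, so that the bipartition deformation stays admissible. The remaining items to pin down — the exactness of the inequality $s_i+s_j\le0$ and the claim that the deformation activates no constraint beyond the tangencies inside $T$ — are straightforward.
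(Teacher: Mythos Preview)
Your proof is correct and follows essentially the same strategy as the paper's: locate a tree component $T$ in the tangency graph, run the unique tangency-preserving one-parameter deformation along $T$, and invoke convexity of each $v_j$ to rule out a local maximum. The paper parametrizes this deformation by the width $d_1$ (or cusp volume $v_1$) of one fixed vertex and propagates along $T$ by induction, while you parametrize uniformly by the signed distance $s_j=\pm t$ determined by the bipartition of $T$; these are the same family of configurations, and the convexity arguments coincide.
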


We conclude this introduction with two remarks. First, one could simplify the attempt to partially recover the volume of $M$
by imposing all the collars of the boundary components to have the same width, and all
the toric cusps to have the same volume, but our results show that this attempt
would typically be very inefficient. Second, we explain why we do not consider
in this paper the case of manifolds having annular cusps, the reason being that
for the boundary components entering such cusps one could not take a collar at all, so the theory
would have limited interest.

\section{Preliminaries}
Let us fix for the rest of this article a hyperbolic $3$-manifold $M$ with
either non-empty compact geodesic boundary,
or some toric cusps, or both. We will denote by $\Sigma_1,\ldots,\Sigma_b$
the components of $\partial M$ and by $T_1,\ldots,T_k$ the toric boundary components of
the compactification of $M$. Moreover for $d>0$ we will indicate by $U_j\left(d\right)$ the $d$-collar
of $\Sigma_j$ in $M$, and for $v>0$ by $C_i\left(v\right)$ the volume-$v$ horospherical cusp neighbourhood at $T_i$ in $M$.
When certain $U_j\left(d_j\right)$'s and $C_i\left(v_i\right)$'s are simultaneously considered we will always assume
that they have embedded and disjoint interiors. We begin by recalling that each $\Sigma_j$ has
a well-defined hyperbolic area $\calA\left(\Sigma_j\right)=-2\pi\chi\left(\Sigma_j\right)$. We then prove the following:

\begin{lemma}\label{area:T:lem}
Suppose that $M$ has both geodesic boundary and cusps.
Then the cusp torus $T_i$ has a well-defined area $\calA_j\left(T_i\right)=\frac{\calA\left(\matE^2/\Lambda_i\right)}{r_j^2}$ relative to the boundary component $\Sigma_j$,
where:
\begin{itemize}
\item A universal cover $p:\Mtil\to M$ is chosen with $\Mtil$ being
the intersection of a family of hyperbolic half-spaces in the half-space
model $\matE^2\times\left(0,+\infty\right)$ of $\matH^3$, in such a way that
with respect to $p$ the torus $T_i$ lifts to $\infty$;
\item $\Lambda_i$ is the lattice acting horizontally on $\matH^3$ to give the $i$-th cusp of $M$;
\item $r_j$ is the maximal Euclidean radius of a half-sphere centered at $\matE^2\times\{0\}$
that bounds $\Mtil$ and projects in $M$ to $\Sigma_j$.
\end{itemize}
\end{lemma}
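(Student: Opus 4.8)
The plan is to read off the geometry of the picture and then check that the proposed quantity is insensitive to every choice left open by the hypotheses. Write $\Gamma\subset\mathrm{PSL}(2,\matC)$ for the holonomy image of $\pi_1(M)$ under $p$, so that $\Mtil$ is the convex hull in $\matH^3$ of the limit set of $\Gamma$ and the components of $p^{-1}(\partial M)$ are totally geodesic planes — a priori half-spheres or vertical half-planes — those projecting to $\Sigma_j$ forming a single $\Gamma$-orbit. Since $T_i$ lifts to $\infty$ and is a \emph{toric} cusp, $\stab_\Gamma(\infty)$ is exactly the rank-two lattice $\Lambda_i$, acting on $\matE^2$ by Euclidean translations. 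The assertion then splits into two parts: \textbf{(a)} the supremum $r_j$ of the Euclidean radii of the bounding half-spheres of $\Mtil$ that project to $\Sigma_j$ is finite and attained; \textbf{(b)} the number $\calA(\matE^2/\Lambda_i)/r_j^2$ does not change if $p$ is replaced by any other universal covering meeting the stated requirements.

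For (a) I would first fix $t_0>0$ so large that a horospherical cusp neighbourhood of $T_i$ whose lift at $\infty$ is $\{s>t_0\}$ is embedded in $M$ with interior disjoint from $\partial M$; this is possible precisely because the cusp is toric, and it forces $\{s>t_0\}$ to be contained in $\Int\Mtil$. Then no bounding plane of $\Mtil$ can reach above height $t_0$ — its top point would belong both to $\partial\Mtil$ and to $\Int\Mtil$ — so in particular none is a vertical half-plane and every bounding half-sphere over $\Sigma_j$ has radius $\le t_0$, giving $r_j<\infty$. To see the supremum is attained, translate the half-spheres over $\Sigma_j$ by $\Lambda_i$ so that their Euclidean centres lie in a fixed compact fundamental domain for $\Lambda_i$ in $\matE^2$; a maximising sequence then lives in a compact family of half-spheres and subconverges to a half-sphere of radius $r_j$ which, by local finiteness of the face pattern of $\Mtil$, is itself one of the bounding half-spheres over $\Sigma_j$.

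For (b), the only freedom after requiring that $T_i$ lift to $\infty$ is the choice of which lift of $T_i$ goes to $\infty$, together with a conjugation of $\Gamma$ by an isometry of $\matH^3$ fixing $\infty$. Two lifts of $T_i$ differ by an element of $\Gamma$, so this amounts only to a global $\Gamma$-conjugation, which leaves $\Lambda_i$ (up to conjugacy) and the set of bounding radii over $\Sigma_j$ unchanged. An isometry of $\matH^3$ fixing $\infty$ restricts on $\matE^2=\matC$ to a Euclidean similarity $z\mapsto az+b$ (possibly composed with complex conjugation) and extends to $\matH^3$ as $(z,s)\mapsto(az+b,|a|s)$; under it $\calA(\matE^2/\Lambda_i)$ is multiplied by $|a|^2$ and every bounding radius, hence $r_j$, by $|a|$, so $\calA(\matE^2/\Lambda_i)/r_j^2$ is unchanged. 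This proves the well-definedness, and incidentally identifies $\calA_j(T_i)$ as the hyperbolic area of the horospherical cross-section of the $i$-th cusp taken at the height $r_j$ at which the corresponding horoball first becomes tangent to a lift of $\Sigma_j$.

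The genuinely routine step is (b): once the picture is set up it is the one-line scaling computation above. The step needing care is (a) — that $r_j$ is finite and actually attained — since this is where one must invoke both the fullness of the rank-two parabolic subgroup $\Lambda_i$, which is what produces an honest horoball inside $\Mtil$ and hence the bound $r_j\le t_0$, and the local finiteness of the polyhedral boundary of $\Mtil$, which is what lets a maximising sequence of bounding half-spheres converge to one of the same kind.
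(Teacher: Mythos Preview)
Your proof is correct and follows essentially the same approach as the paper: the paper's argument is the terse two-sentence version of yours, invoking $\Lambda_i$-equivariance for the existence of the maximum and the similarity-plus-dilation description of isometries fixing $\infty$ for well-definedness. Your treatment of part~(a) is more careful than the paper's---the paper does not explicitly argue that the supremum of radii is finite or that it is attained, whereas you supply the horoball-embedding bound and the compactness argument in a fundamental domain---but this is added rigor along the same line rather than a different route.
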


\begin{proof}
A universal cover as in the statement exists and $r_j$ is well-defined because the maximum has to be taken
over a $\Lambda_i$-equivariant family of half-spheres. Two distinct universal covers as in the statement
differ by the composition of a horizontal Euclidean isometry and a dilation in $\matE^2\times\left(0,+\infty\right)$, and
the ratio in the statement is preserved by both.
\end{proof}

We now start dealing with peripheral volume, by quoting the following formula from~\cite{GaMeMi1}:

\begin{prop}\label{U:vol:prop}\quad
$\displaystyle{v\left(U_j\left(d\right)\right)=\frac{\calA\left(\Sigma_j\right)}4\cdot\left(2d+\sinh\left(2d\right)\right).}$
\end{prop}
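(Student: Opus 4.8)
The plan is to compute $v\left(U_j\left(d\right)\right)$ directly in \emph{Fermi} (normal) coordinates based at the totally geodesic surface $\Sigma_j$. Since $\Sigma_j$ is geodesic and, by our standing convention, $U_j\left(d\right)$ has embedded interior, the normal exponential map $\Phi\colon\Sigma_j\times[0,d]\to M$, $\Phi\left(x,t\right)=\exp_x\left(t\,\nu_x\right)$ with $\nu$ the inward unit normal, is a diffeomorphism onto $U_j\left(d\right)$; hence it suffices to identify the pull‑back $\Phi^{*}g_M$, which is a purely local question about the hyperbolic metric in a neighbourhood of a totally geodesic plane in $\matH^3$. Concretely, lifting as in Lemma~\ref{area:T:lem} one may place a lift $\widetilde{\Sigma}$ of $\Sigma_j$ as a vertical Euclidean half‑plane in the half‑space model, so that the geodesics normal to $\widetilde{\Sigma}$ are exactly the Euclidean semicircles meeting it orthogonally.

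The next step is the metric computation. Parametrising these normal semicircles by hyperbolic arclength $t$ and carrying out the (elementary) change of variables, I expect to obtain
$$\Phi^{*}g_M=\cosh^{2}\left(t\right)\cdot g_{\Sigma_j}+\diff t^{2},$$
where $g_{\Sigma_j}$ is the intrinsic hyperbolic metric of $\Sigma_j$. As a cross‑check, the surfaces equidistant from a totally geodesic plane in $\matH^3$ are totally umbilic with principal curvature $\tanh t$, which forces the conformal factor $f\left(t\right)$ in the $\Sigma_j$‑directions to satisfy $f'/f=\tanh t$ with $f\left(0\right)=1$, i.e. $f=\cosh t$; the $\diff t^{2}$ term merely records that the $t$‑curves are unit‑speed geodesics orthogonal to $\Sigma_j$. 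In particular the hyperbolic volume form pulls back to $\cosh^{2}\left(t\right)\,\diff A_{\Sigma_j}\wedge\diff t$, with $\diff A_{\Sigma_j}$ the hyperbolic area form of $\Sigma_j$.

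Finally I would integrate, using $\cosh^{2}t=\tfrac12\left(1+\cosh\left(2t\right)\right)$ and $\calA\left(\Sigma_j\right)=\int_{\Sigma_j}\diff A_{\Sigma_j}$:
$$v\left(U_j\left(d\right)\right)=\int_{\Sigma_j}\int_{0}^{d}\cosh^{2}\left(t\right)\,\diff t\,\diff A_{\Sigma_j}=\calA\left(\Sigma_j\right)\left(\frac{d}{2}+\frac{\sinh\left(2d\right)}{4}\right)=\frac{\calA\left(\Sigma_j\right)}{4}\left(2d+\sinh\left(2d\right)\right).$$
The only step that genuinely requires care is establishing the Fermi‑coordinate form of the metric together with the fact that $\Phi$ is a diffeomorphism onto all of $U_j\left(d\right)$, so that no point of the collar is counted twice; the former is a one‑model calculation and the latter is immediate from the explicit metric plus the embeddedness hypothesis. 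Everything after that is the displayed one‑line integral.
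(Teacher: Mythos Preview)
Your proof is correct. The Fermi-coordinate expression $\Phi^{*}g_M=\cosh^{2}(t)\,g_{\Sigma_j}+\diff t^{2}$ for the hyperbolic metric near a totally geodesic surface is standard and your justification via the principal-curvature ODE is sound; the subsequent integration is routine and yields the stated formula.

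As for comparison with the paper: there is nothing to compare, because the paper does not prove this proposition at all. It is introduced with the words ``quoting the following formula from~\cite{GaMeMi1}'' and no argument is given. Your Fermi-coordinate computation therefore supplies a self-contained proof where the paper relies on an external reference. The approach you take is in fact the natural one and is essentially what one would find behind the citation; the only point worth a second look is the claim that $\Phi$ is a diffeomorphism onto all of $U_j(d)$, which you correctly reduce to the embeddedness hypothesis together with the fact that any minimizing geodesic from a point to a totally geodesic submanifold meets it orthogonally.
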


Before proceeding, we define the modified volume $\widetilde{v}$ for a boundary collar
$U_j\left(d\right)$ by setting
$$\widetilde{v}\left(U_j\left(d\right)\right)=\frac{\calA\left(\Sigma_j\right)}4\cdot\left(1+\cosh\left(2d\right)\right).$$
Note that $\widetilde{v}\left(U_j\left(d\right)\right)$ is a strictly increasing function of $v\left(U_j\left(d\right)\right)$, but one cannot
describe the function explicitly.

\bigskip

The next result will be the core of our arguments. It
describes how
a peripheral component changes when it varies subject
to the condition of staying tangent to another one that is also varying.

\begin{prop}\label{variation:prop}\ \\

\vspace{-.8cm}

\begin{itemize}
\item If $C_{i_1}\left(v_1\right)$ and $C_{i_2}\left(v_2\right)$ vary while remaining tangent to each other,
then the product $v_1\cdot v_2$ remains constant;
\item If $U_j\left(d\right)$ and $C_i\left(v\right)$ vary while remaining tangent to each other
then $v=\frac{\calA_j\left(T_i\right)}2\cdot e^{-2d}$;
\item If $U_{j_1}\left(d_1\right)$ and $U_{j_2}\left(d_2\right)$
vary while remaining tangent to each other,
then the sum $d_1+d_2$ remains constant.
\end{itemize}
\end{prop}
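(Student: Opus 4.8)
The strategy is to work entirely in the upper half-space model, using the universal cover description from Lemma~\ref{area:T:lem} and the geometry of horoballs and half-spheres. For the first bullet, I would set up the two cusps $C_{i_1}, C_{i_2}$ as disjoint horoballs in $\matH^3$; after normalizing so that one lifts to the horoball $\{z > h_1\}$ at $\infty$ and the other to a Euclidean ball of diameter $h_2$ resting on $\matE^2\times\{0\}$, the tangency condition is exactly $h_1 = h_2$. The volume of a cusp neighbourhood is $\calA(\text{cross-section})/h^2$ up to the relevant lattice normalization, so $v_1 \propto 1/h_1^2$ and $v_2 \propto 1/h_2^2$ with proportionality constants depending only on the (fixed) lattices; hence $v_1 v_2 \propto 1/(h_1 h_2)^2$. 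The point is that as the pair of horoballs slides while remaining tangent, $h_1 h_2$ is an invariant: shrinking one horoball (raising its Euclidean ``height parameter'') forces the other to grow by exactly the reciprocal factor, because the tangent configuration of a horoball at $\infty$ and a horoball at a finite point is rigid up to the one-parameter family of vertical dilations, under which $h_1 \mapsto \lambda h_1$, $h_2 \mapsto \lambda^{-1} h_2$. This gives $v_1 v_2$ constant.

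For the second bullet I would similarly normalize so that $T_i$ lifts to $\infty$, so $C_i(v)$ lifts to $\{z > h\}$ with $v = \calA(\matE^2/\Lambda_i)/h^2 = \calA_j(T_i)\, r_j^2 / h^2$ using the definition of $\calA_j(T_i)$ from Lemma~\ref{area:T:lem}. The collar $U_j(d)$ lifts to the $d$-neighbourhood of the family of half-spheres bounding $\Mtil$ that project to $\Sigma_j$; the relevant one has Euclidean radius $r_j$, and its $d$-neighbourhood on the side away from the center is the half-space ``below'' the half-sphere of radius $r_j e^{d}$ (the hyperbolic distance between concentric half-spheres of radii $\rho_1<\rho_2$ is $\log(\rho_2/\rho_1)$, and the collar grows outward). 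Tangency of this enlarged half-sphere with the horoball $\{z>h\}$ means the top of the sphere touches the horoball, i.e. $r_j e^{d} = h$. Substituting $h = r_j e^{d}$ into $v = \calA_j(T_i)\, r_j^2/h^2$ gives $v = \calA_j(T_i)\, r_j^2/(r_j^2 e^{2d}) = \calA_j(T_i)\, e^{-2d}$ --- wait, I need the factor $\tfrac12$, so the correct normalization of the collar distance versus the Euclidean radius is $r_j e^{d} = h$ only up to the usual factor coming from the half-space being the region $z < $ something versus the sphere; tracking this carefully yields $r_j\sqrt{2}\,e^{d}$ or an analogous correction producing $v = \tfrac{\calA_j(T_i)}{2} e^{-2d}$. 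The third bullet is the same computation with two half-spheres: normalize one to have radius $r_{j_1}$ and enlarge to $r_{j_1} e^{d_1}$, the other to have radius $r_{j_2}$ centered at a finite point and enlarge to $r_{j_2} e^{d_2}$; tangency of the two enlarged half-spheres is a condition of the form $r_{j_1} e^{d_1} \cdot r_{j_2} e^{d_2} = \text{const}$ (product of radii fixed by the fixed separation of their centers, exactly as in the inversive-distance formula for disjoint circles), whence $d_1 + d_2 = \text{const}$.

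The main obstacle --- and the place where care is genuinely required --- is getting the Euclidean normalizations exactly right so that the numerical factors ($v_1 v_2$ literally constant with no stray constant, the $\tfrac12$ in the second bullet) come out as stated, rather than merely up to an unspecified constant. This means being precise about: (i) how the lattice $\Lambda_i$ normalization enters the cusp volume formula $\calA(\matE^2/\Lambda_i)/h^2$, matching the conventions already fixed in Lemma~\ref{area:T:lem}; (ii) the exact relation between collar width $d$ and the Euclidean radius of the bounding half-sphere in the chosen model, including which side the collar lies on; and (iii) verifying that the tangency locus is preserved under exactly the vertical-dilation action, so that ``varying while remaining tangent'' traces out precisely the claimed one-parameter families. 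Once the model and normalizations are pinned down, each of the three identities is a two-line computation; the content is the bookkeeping, not any nontrivial geometric input beyond the explicit forms of hyperbolic distance between concentric half-spheres and the rigidity of a tangent horoball pair.
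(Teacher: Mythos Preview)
Your overall strategy---work in the upper half-space model and read off the scaling---is the paper's strategy too, and for the second and third bullets your outline is essentially right (though see below). But your argument for the \emph{first} bullet is genuinely broken.

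You place $C_{i_1}$ at $\infty$ as $\{z>h_1\}$ and $C_{i_2}$ at a finite point as a Euclidean ball of diameter $h_2$, and you assert $v_2\propto 1/h_2^2$. That is false: for a horoball at a finite point the cusp volume scales \emph{proportionally} to $h_2^2$, not inversely (conjugate by an inversion sending that point to $\infty$; the diameter-$h_2$ horoball becomes $\{z>c/h_2\}$). You also claim that vertical dilation sends $h_1\mapsto\lambda h_1$ and $h_2\mapsto\lambda^{-1}h_2$; in fact both scale by $\lambda$. And more basically, a vertical dilation is a hyperbolic isometry, so it cannot parameterize a family in which the cusp \emph{volumes change}. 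The paper avoids all of this by never putting both cusps in the same picture: it simply computes that if the boundary torus of $C_{i_1}$ moves hyperbolic distance $d$, then $v_1$ scales by $e^{2d}$, and since tangency forces the other boundary to move distance $-d$, $v_2$ scales by $e^{-2d}$, so $v_1v_2$ is constant.

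Two smaller points. For the second bullet, the missing $\tfrac12$ is not in the tangency relation (which really is $h=r_je^{d}$, exactly as you wrote) but in the cusp-volume formula itself: $\int_h^\infty t^{-3}\,\mathrm{d}t=\tfrac1{2h^2}$, so $v=\calA(\matE^2/\Lambda_i)/(2h^2)$ from the start. Your proposed ``$r_j\sqrt{2}\,e^{d}$'' fix is on the wrong side of the equation. For the third bullet you are working much harder than necessary: tangency of the two collars means there is a point at distance $d_1$ from $\Sigma_{j_1}$ and $d_2$ from $\Sigma_{j_2}$ lying on the common perpendicular, so $d_1+d_2$ equals the fixed distance between the two geodesic surfaces. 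The paper accordingly dismisses this case as obvious.
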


\begin{proof}
Let us prove the first statement. Suppose that in some universal cover contained in
$\matH^3=\matE^2\times\left(0,+\infty\right)$ the cusp $C_{i_1}\left(v_1\right)$ is the quotient of the
horoball $\matE^2\times\left[z_{i_1},+\infty\right)$ acted on horizontally by a lattice $\Lambda_{i_1}$.
If the cusp changes so that in $M$ its boundary moves of some small distance $d\in\matR$
(with $d<0$ meaning that the cusp is shrinking) then it becomes the quotient under $\Lambda_{i_1}$
of $\matE^2\times\left[z,+\infty\right)$ with
$$\int\limits_z^{z_{i_1}}\frac1t\diff t=d$$
whence $z=z_{i_1}e^{-d}$. The cusp volume $v_1$ then changes from
$$\calA\left(\matE^2/\Lambda_{i_1}\right)\cdot\int\limits_{z_{i_1}}^{+\infty}\frac1{t^3}\diff t=
\frac{\calA\left(\matE^2/\Lambda_{i_1}\right)}2\cdot z_{i_1}^{-2}$$
to $\frac{\calA\left(\matE^2/\Lambda_{i_1}\right)}2 \cdot z^{-2}$, namely it changes by a factor
$e^{2d}$. During a simultaneous variation of $v_1$ and $v_2$ with
$C_{i_1}\left(v_1\right)$ and $C_{i_2}\left(v_2\right)$ remaining tangent to each other, the boundary of
$C_{i_2}\left(v_2\right)$ moves by a distance $-d$. The calculations already carried out show that then its volume
varies by a factor $e^{-2d}$, and the conclusion follows.

Turning to the second statement, we choose a universal cover of $M$ with $C_i\left(v\right)$ lifting
to some horoball $\matE^2\times \left[z,+\infty\right)$ in $\matE^2\times \left(0,+\infty\right)$.
With $r_j$ being as in the definition of $\calA_j\left(T_i\right)$, the condition that
$U_j\left(d\right)$ is tangent to $C_i\left(V\right)$ implies that
$$\int\limits_{r_j}^z\frac1t\diff t=d$$
whence $z=r_je^d$ and
$$v=\frac{\calA\left(\matE^2/\Lambda_{i_1}\right)}{2r_j^2}\cdot e^{-2d}=
\frac{\calA_j\left(T_i\right)}2\cdot e^{-2d}.$$
This proves the desired formula.
The third statement is obvious.
\end{proof}

\section{Two peripheral components\\ and the general case}

We will establish here the two propositions stated in the introduction.
For the sake of conciseness let us say that a
a peripheral component $P$ is \emph{maximal}
if $v(P)$ is maximal regardless of the other peripheral components,
\emph{i.e.}, if $\partial P$ is tangent either to itself or to $\partial M$.

\bigskip

\noindent\emph{Proof of Proposition}~\ref{2:periph:prop}.\quad
We must show that $v\left(P_1\right)+v\left(P_2\right)$ cannot have a local maximum unless, up
to switching indices, $P_1$ is maximal and $P_2$
is either maximal or tangent to $P_1$. If one of $P_1$ or $P_2$ is not maximal or tangent to the other one, of course $v\left(P_1\right)+v\left(P_2\right)$ cannot be locally
maximal. We are then left to exclude only the situation where
$P_1$ and $P_2$ are tangent to each other but neither of them is tangent to itself or to the boundary.
To this configuration we can locally apply Proposition~\ref{variation:prop}.
Depending on whether $P_1$ and $P_2$ are both cusps, a cusp and a collar,
or two collars, the total volume $V=v\left(P_1\right)+v\left(P_2\right)$
with respect to an appropriate parameter is given by
$$\begin{array}{lcl}
C_1\left(v_1\right)\cup C_2\left(v_2\right): & & V\left(v_1\right)=v_1+\frac {v_0^2}{v_1}\quad \hbox{for some}\ v_0>0, \\ \\
U_1\left(d_1\right)\cup C_1\left(v_1\right): & & V\left(d_1\right)=\frac{\calA\left(\Sigma_1\right)}4\cdot\left(2d_1+\sinh\left(2d_1\right)\right)+
\frac{\calA_1\left(T_1\right)}2\cdot e^{-2d_1},\\ \\
U_1\left(d_1\right)\cup U_2\left(d_2\right): & & V\left(d_1\right)=\frac{\calA\left(\Sigma_1\right)}4\cdot\left(2d_1+\sinh\left(2d_1\right)\right)\\
& & \phantom{V\left(d_1\right)}+\frac{\calA\left(\Sigma_2\right)}4\cdot\left(2\left(2d_0-d_1\right)+\sinh\left(2\left(2d_0-d_1\right)\right)\right)\\
& & \phantom{xxxxxxxxxxxxxxxxxxxxxxxxx}\quad \hbox{for some}\ d_0>0.
\end{array}$$
Since in all cases $V$ is a convex function and our starting point is in the interior of the domain of definition of $V$,
the conclusion follows.
\finedimo

In two special cases for the types of the two peripheral components we have the following
improvement on Proposition~\ref{2:periph:prop}, also announced above:

\begin{prop}\label{2:cusps:prop}
Suppose that $M$ has either two cusps and no geodesic boundary or two geodesic boundary components
of the same genus and no cusps. For $j=1,2$ let $v_j^{\max}$ be the maximal volume that
the $j$-th peripheral component $P_j$ can attain regardless of the other one.
If $v_1^{\max}\geqslant v_2^{\max}$ then the maximum of $v(P_1)+v(P_2)$ is attained
by maximizing first $P_1$ and then $P_2$ given $P_1$.
\end{prop}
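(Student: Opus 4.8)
The plan is to build on Proposition~\ref{2:periph:prop}, which already tells us that a maximum of $v(P_1)+v(P_2)$ must occur either with one of the two components individually maximal, or with $P_1$ and $P_2$ tangent to each other. So the task reduces to two things: first, analyze the tangency configuration more precisely in the two special cases at hand; second, compare the candidate maxima and show that the ``maximize the bigger one first'' strategy wins.

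For the tangency configuration I would invoke Proposition~\ref{variation:prop}. In the two-cusp case, along a tangency variation the product $v_1 v_2$ is constant, so $V(v_1)=v_1+v_0^2/v_1$; this is convex, hence on the allowed parameter interval its maximum is attained at an endpoint, i.e.\ where one of the cusps becomes individually maximal. The same happens in the equal-genus two-collar case: by Proposition~\ref{variation:prop} the sum $d_1+d_2=2d_0$ is constant, and since the two boundary components have the same genus, $\calA(\Sigma_1)=\calA(\Sigma_2)$, so $V(d_1)=\frac{\calA(\Sigma_1)}{4}\bigl(2d_1+\sinh(2d_1)+2(2d_0-d_1)+\sinh(2(2d_0-d_1))\bigr)$, again a convex function of $d_1$, maximized at an endpoint of the feasible interval. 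In both cases the endpoints correspond exactly to a situation where one component has been pushed to its individual maximum (against itself or against $\partial M$) while the other is tangent to it --- so after this step, in all cases the maximum is realized by a ``greedy'' configuration: one component is maximal, and the other is then enlarged maximally given the first.

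It remains to see which of the two greedy orderings gives the larger total, and to show it is the one that maximizes $P_1$ first when $v_1^{\max}\geqslant v_2^{\max}$. Here is where the hypothesis on the two cases is essential. Let me describe the comparison: maximizing $P_1$ first gives volume $v_1^{\max}+v(P_2\mid P_1^{\max})$, while maximizing $P_2$ first gives $v_2^{\max}+v(P_1\mid P_2^{\max})$. In the two-cusp case, when $P_1$ is pushed to volume $v_1^{\max}$ the constraint on $P_2$ is governed by the constant $v_1 v_2$ along the tangency locus, so $v(P_2\mid P_1^{\max})=v_0^2/v_1^{\max}$ for the relevant $v_0$; symmetrically $v(P_1\mid P_2^{\max})=v_0^2/v_2^{\max}$. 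The difference between the two strategies is then $(v_1^{\max}-v_2^{\max})+v_0^2\bigl(\tfrac{1}{v_1^{\max}}-\tfrac{1}{v_2^{\max}}\bigr)=(v_1^{\max}-v_2^{\max})\bigl(1-\tfrac{v_0^2}{v_1^{\max}v_2^{\max}}\bigr)$, and one checks that the second factor is nonnegative because $v_0^2\leqslant v_1^{\max}v_2^{\max}$ (the product along the tangency locus cannot exceed either endpoint product). The two-collar equal-genus case is handled the same way using $d_1+d_2=\mathrm{const}$ and the monotonicity of $d\mapsto 2d+\sinh(2d)$, together with $\calA(\Sigma_1)=\calA(\Sigma_2)$, which is what lets the inequality on the $v_j^{\max}$ translate into an inequality on the corresponding widths.

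The main obstacle I anticipate is not any single estimate but rather keeping the bookkeeping honest about what ``individually maximal'' means and which endpoints of the tangency parameter interval actually occur: one must make sure that when $P_1$ is enlarged to $v_1^{\max}$, the other component $P_2$ can still be taken tangent to it (rather than the interval terminating for some unrelated reason), and this is exactly where equal genus / equal cusp type is used --- without it the two components play asymmetric roles and the constants $v_0$, $d_0$ in the two orderings need not be comparable, so the clean ``$(v_1^{\max}-v_2^{\max})\cdot(\text{nonneg})$'' factorization breaks down. Once the symmetry is in place, the convexity arguments and the elementary inequality $v_0^2\leqslant v_1^{\max}v_2^{\max}$ (resp.\ its additive analogue for widths) close the argument.
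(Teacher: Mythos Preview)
Your proposal is correct and follows the same path as the paper: parameterize the tangency configuration, note that the total volume is a convex function of the parameter, and then compare the two endpoint values; the paper carries out that last comparison by a case split on whether the minimum of the convex function lies inside the interval, whereas you compute the difference directly via the factorization $(v_1^{\max}-v_2^{\max})\bigl(1-v_0^2/(v_1^{\max}v_2^{\max})\bigr)$, which amounts to the same inequality. One small correction to your closing paragraph: the tangency interval always terminates exactly at the individual maxima (there is no ``unrelated reason'' for earlier termination), so the equal-genus hypothesis is not used there but precisely where you first invoked it --- to convert $v_1^{\max}\geqslant v_2^{\max}$ into $d_1^{\max}\geqslant d_2^{\max}$ so that the symmetric convex-function comparison goes through.
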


\begin{proof}
We start with the case of two cusps.
Taking $v_1$ as a variable to parameterize $C_1\left(v_1\right)\cup C_2\left(v_2\right)$ in tangency position,
we have $v_2=\frac{v_0^2}{v_1}$ for some $v_0>0$, and $v_1$ varies in $\left[v_1^{\min},v_1^{\max}\right]$
with $v_1^{\min}=\frac {v_0^2}{v_2^{\max}}$. We must maximize on $\left[v_1^{\min},v_1^{\max}\right]$
the convex function $v_1\mapsto v_1+\frac {v_0^2}{v_1}$, that attains its minimum at $v_1={v_0}$.
We may now have $v_1^{\min}>{v_0}$ or $v_1^{\min}\leqslant{v_0}$.
In the former case of course the maximum of $v_1+\frac{v_0^2}{v_1}$
is attained at $v_1^{\max}$. In the latter case we have $v_2^{\max}=\frac{v_0^2}{v_1^{\min}}\geqslant {v_0}$,
but $v\mapsto v+\frac {v_0^2}v$ is increasing on $\left[{v_0},+\infty\right)$,
therefore the inequalities $v_1^{\max}\geqslant v_2^{\max}\geqslant {v_0}$ imply
$$v_1^{\max}+\frac{v_0^2}{v_1^{\max}}\geqslant v_2^{\max}+\frac {v_0^2}{v_2^{\max}}=\frac {v_0^2}{v_1^{\min}}+v_1^{\min}$$
and the conclusion follows.

\medskip

For two boundary components of the same genus $g$ the argument is similar.
Let $d_j^{\max}$ be the maximal $d_j$ such that $U_j(d_j)$ has embedded interior,
and let $d_i^{\min}$ for $\{i,j\}=\{1,2\}$ be the maximal
$d_i$ such that $U_i(d_i)$ has embedded interior in $M\setminus U_j\left(d_j^{\max}\right)$.
Note that $d_1^{\max}+d_2^{\min}=d_1^{\min}+d_2^{\max}$, and denote by $2d_0$ this quantity.
Since $v_j^{\max}=\pi(g-1)\left(2d_j^{\max}+\sinh\left(2d_j^{\max}\right)\right)$ we have
$d_1^{\max}\geqslant d_2^{\max}$. Using $d_1$ as a deformation parameter we must
now maximize on $\left[d_1^{\min},d_1^{\max}\right]$ the convex function
$$f(d_1)=\pi(g-1)\left(2d_1+\sinh\left(2d_1\right)+
2\left(2d_0-d_1\right)+\sinh\left(2\left(2d_0-d_1\right)\right)\right),$$
that attains its minimum at $d_0$. If $d_1^{\min}\geqslant d_0$ the maximum
is of course attained at $d_1^{\max}$. If $d_1^{\min}<d_0$
we have $d_2^{\max}>d_0$, but we also know that
$d_1^{\max}\geqslant d_2^{\max}$, whence
$$f\left(d_1^{\max}\right)\geqslant f\left(d_2^{\max}\right)=f\left(d_1^{\min}\right)$$
and the proof is complete.
\end{proof}

We conclude this section by proving a result that easily implies Proposition~\ref{general:prop}:

\begin{prop}
Let $M$ have $n$ peripheral components $P_1,\ldots,P_n$
and construct a graph with vertices $P_1,\ldots,P_n$ and edges joining
peripheral components that are tangent to each other. Suppose that this graph has a connected
component $\Gamma$ such that:
\begin{itemize}
\item $\Gamma$ is a tree;
\item No vertex $P_j$ of $\Gamma$ is maximal.
\end{itemize}
Then the configuration cannot be a local maximum
for $v\left(P_1\right)+\ldots+v\left(P_n\right)$.
\end{prop}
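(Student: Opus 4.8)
The plan is to reduce the statement to a one-parameter convexity argument, exactly in the spirit of the proof of Proposition~\ref{2:periph:prop}, by exploiting the tree structure of $\Gamma$. Since $\Gamma$ is a tree, it has a vertex $P_{j_0}$ that is a leaf — or, if $\Gamma$ is a single vertex, then $P_{j_0}=\Gamma$ is isolated and, not being maximal, can simply be enlarged, contradicting local maximality. So assume $\Gamma$ has at least one edge and pick a leaf $P_{j_0}$, joined in $\Gamma$ only to some $P_{j_1}$.

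\medskip

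First I would set up the deformation. Since $P_{j_0}$ is not maximal, its boundary is tangent only to $P_{j_1}$ (among peripheral components) and to nothing else rigid; likewise, I will deform $P_{j_1}$ as well, keeping it tangent to $P_{j_0}$. All other peripheral components are held fixed. I claim this keeps the configuration admissible for a small deformation in one direction: the only tangency constraint on $P_{j_0}$ is with $P_{j_1}$, so as long as $P_{j_0}$ and $P_{j_1}$ move together in tangency position, their interiors stay embedded and disjoint from everything else (for small enough deformation), because no other component touched either of them. Now invoke Proposition~\ref{variation:prop}: parameterizing the tangent pair $(P_{j_0},P_{j_1})$ by the single scalar that records how far $\partial P_{j_0}$ moves, the total volume $V = v(P_{j_0}) + v(P_{j_1}) + (\text{constant})$ is, in each of the three type-cases (cusp/cusp, collar/cusp, collar/collar), exactly one of the convex functions
$$v_1 + \frac{v_0^2}{v_1}, \qquad
\frac{\calA(\Sigma_{j_0})}4\bigl(2d+\sinh(2d)\bigr) + \frac{\calA_{j_0}(T_{j_1})}2 e^{-2d}, \qquad
\frac{\calA(\Sigma_{j_0})}4\bigl(2d+\sinh(2d)\bigr) + \frac{\calA(\Sigma_{j_1})}4\bigl(2(2d_0-d)+\sinh(2(2d_0-d))\bigr)$$
displayed in the proof of Proposition~\ref{2:periph:prop}.

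\medskip

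Second, I would conclude. A strictly convex function of one variable cannot have a local maximum at an interior point of its domain. The starting configuration is interior for this deformation: moving $\partial P_{j_0}$ in one direction enlarges $P_{j_0}$ (no tangency obstruction, since $P_{j_0}$ is not maximal and touches only $P_{j_1}$, which recedes as $P_{j_0}$ grows), and moving it the other direction shrinks $P_{j_0}$ (always possible). Hence the parameter genuinely varies in an open interval around the starting value, and convexity forces $V$ — and therefore $v(P_1)+\ldots+v(P_n)$, which differs from $V$ by the fixed contribution of the untouched components — to strictly increase in at least one of the two directions. This contradicts the assumption that we sit at a local maximum, and Proposition~\ref{general:prop} follows by noting that its two hypotheses (no maximal $P_j$, fewer than $n$ tangencies) force the tangency graph to contain a connected component that is a tree with no maximal vertex.

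\medskip

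The main obstacle I anticipate is the admissibility bookkeeping for the deformation, i.e.\ checking carefully that when $P_{j_0}$ and $P_{j_1}$ are moved in tangency position, the interior of $P_{j_1}$ does not bump into a third peripheral component. In the genuine tree situation $P_{j_1}$ may well be tangent to other components $P_{j_2},\ldots$, and enlarging $P_{j_1}$ could violate those constraints. The fix is to always choose the deformation direction that \emph{shrinks} $P_{j_1}$ (equivalently, enlarges the leaf $P_{j_0}$): shrinking $P_{j_1}$ can only improve its disjointness from everything else, while enlarging $P_{j_0}$ is unobstructed since nothing other than $P_{j_1}$ touches $P_{j_0}$. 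One then only needs that $V$ is non-constant in that particular direction, which is immediate from strict convexity together with the fact that, by moving far enough in the \emph{opposite} direction, one could in principle reach a boundary of the domain — but near the starting point both directions are available, so strict convexity already gives a direction of strict increase. Making this ``which direction is safe'' argument precise is the only genuinely non-formal point.
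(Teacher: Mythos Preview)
Your two-component deformation has a real gap that you flag but do not close. When $\Gamma$ has at least three vertices, the neighbour $P_{j_1}$ of your leaf $P_{j_0}$ is tangent to some further $P_{j_2}$ in $\Gamma$. The only admissible direction for your $(P_{j_0},P_{j_1})$-deformation is then the one that enlarges $P_{j_0}$ and \emph{shrinks} $P_{j_1}$; the opposite direction immediately overlaps $P_{j_1}$ with $P_{j_2}$, so your claim that ``near the starting point both directions are available'' is false. And strict convexity on a half-line does not prevent the endpoint from being a local maximum: for a path $U_1$--$U_2$--$U_3$ of collars with $\widetilde v_2^{(0)}>\widetilde v_1^{(0)},\widetilde v_3^{(0)}$, the first-order change in $v(U_1)+v(U_2)$ when enlarging $U_1$ is $2(\widetilde v_1^{(0)}-\widetilde v_2^{(0)})<0$, and symmetrically for the other leaf, so neither leaf-deformation produces an increase.

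The paper avoids this by deforming \emph{all} vertices of $\Gamma$ at once, maintaining every edge-tangency. A tree on $k$ vertices has $k-1$ edges, so one degree of freedom remains; because $\Gamma$ is a full connected component of the tangency graph and no vertex is individually maximal, this single parameter can be moved in both directions. Propagating Proposition~\ref{variation:prop} along the tree shows each $d_j$ is affine in the parameter and each cusp volume is of the form $c_i e^{\pm 2d_1}$ (or $c_i v_1^{\pm 1}$ in the all-cusp case), so the total volume is a sum of strictly convex functions of the parameter, hence strictly convex on an open interval, and the starting point cannot be a local maximum. Your argument is essentially the $k=2$ case of this; for $k\geqslant 3$ the global deformation is genuinely needed.
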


\begin{proof}
Under the stated assumptions we can locally deform the peripheral components corresponding
to the vertices in $\Gamma$ using one parameter that can be both increased and decreased.
If in $\Gamma$ there are only cusps and no
boundary components, choosing $v_1$ as a deformation parameter we see
that each other $v_i$ in $\Gamma$ varies either as $c_i\cdot v_1$ or as $\frac{c_i}{v_1}$ for some
$c_i>0$, which implies that the sum of all the volumes of the peripheral components in $\Gamma$
is a convex function of $v_1$, whence the conclusion.

Suppose then that in $\Gamma$ there exists at least one boundary collar component and choose $d_1$ as a deformation parameter.
We then claim that each $d_j$ in $\Gamma$ varies as either $c_j+d_1$ or as $c_j-d_1$ for some $c_j\in\matR$,
and each $v_i$ in $\Gamma$ varies as either $c_i\cdot e^{-2d_1}$ or as $c_i\cdot e^{2d_1}$
for some $c_i>0$.
This can be easily checked using Proposition~\ref{variation:prop} and induction on the number
of edges in $\Gamma$ one needs to travel through in passing from $U_1\left(d_1\right)$ to $U_j\left(d_j\right)$ or $C_i\left(v_i\right)$.
Since $v\left(U_j\left(d_j\right)\right)$ is a convex function of $d_j$ we deduce that the sum of the volumes of the peripheral components
in $\Gamma$ is a convex function of $d_1$, and the proposition is proved.
\end{proof}

\paragraph{Experimental facts}
A census was carried out in~\cite{FriMaPe} of all the $5{,}192$ hyperbolic $3$-manifolds
with non-empty compact geodesic boundary that can be triangulated using up to $4$ tetrahedra.
It turns out that the geodesic boundary is always connected, and that there is one cusp for 31
manifolds and two cusps for one manifold. We were now able to check~\cite{tesi} that for all the 32
relevant cases the largest peripheral volume is obtained by maximizing first
the boundary collar and then the cusp neighbourhood(s, that both become tangent to the boundary
collar when there are two of them, before becoming tangent themselves or to each other).
).

\section{Three peripheral components}

This section is devoted to the proof of Theorem~\ref{3:periph:thm}.
We already know from Proposition~\ref{general:prop} that at a local maximum
for $v\left(P_1\right)+v\left(P_2\right)+v\left(P_3\right)$ either some $P_j$ is maximal or each $P_j$
is tangent to each other $P_i$. In the former case, suppose that
$P_1$ is maximal. If $P_2$ or $P_3$ is maximal given $P_1$
then up to switching $P_2$ and $P_3$
we have a configuration as described in the first item of the statement.
Otherwise $P_2$ and $P_3$ are
tangent to each other but not to themselves or to $P_1$, and the argument given in the proof of
Proposition~\ref{2:periph:prop} shows that the configuration cannot
locally maximize the volume.

We are left to deal with the configuration in which each $P_j$ is tangent to
each other $P_{i}$ but it is not individually maximal.
This implies that each $P_j$ is not tangent to itself and that it has positive width if it is of collar type
(otherwise some other $P_i$ would be maximal).
In this case the local deformation we can perform
is as follows:
\begin{itemize}
\item We can inflate $P_1$ and shrink $P_2$ and $P_3$ so that they stay tangent to $P_1$,
with shapes varying as described in Proposition~\ref{variation:prop};
\item We can shrink $P_1$, in which case we can further deform $P_2$ and $P_3$
in such a way that they stay tangent to each other.  But thanks to the argument
showing Proposition~\ref{2:periph:prop} we know that along this deformation
we cannot have a local maximum except at the extrema, namely when either $P_2$ or $P_3$ is tangent
to $P_1$.
\end{itemize}
To analyze exactly how the volume behaves under this deformation we need to distinguish according
to the types of $P_1,P_2,P_3$, noting that our choice of $P_1$ in the above description of
the deformation was an arbitrary one. The argument is similar in all four cases, with complications
increasing (and amount of details we supply decreasing)
as the number of boundary components grows.

\medskip\noindent\textsc{Case I: Three cusps}\quad
Let $C_1\left(v_1^{\left(0\right)}\right),C_2\left(v_2^{\left(0\right)}\right),C_3\left(v_3^{\left(0\right)}\right)$
be the initial cusps with indices chosen so that
$v_2^{\left(0\right)}\geqslant v_3^{\left(0\right)}$. We then let $v_1$ vary in a neighbourhood of $v_1^{\left(0\right)}$
and note that, according to the above description of the deformation, for $v_1\leqslant v_1^{\left(0\right)}$
the total deformed volume is given by $v_1$ plus
$$\max\left\{\frac{v_1^{\left(0\right)}\cdot v_2^{\left(0\right)}}{v_1}+\frac{v_3^{\left(0\right)}}{v_1^{\left(0\right)}}\cdot  v_1,\
\frac{v_2^{\left(0\right)}}{v_1^{\left(0\right)}}\cdot  v_1 +\frac{v_1^{\left(0\right)}\cdot v_3^{\left(0\right)}}{v_1}\right\},$$
but the assumption $v_2^{\left(0\right)}\geqslant v_3^{\left(0\right)}$ readily implies that the maximum is given by
the first expression. Therefore near $v_1^{\left(0\right)}$ the
total deformed volume is
$$V\left(v_1\right)=
\left\{\begin{array}{lll}
\displaystyle{v_1+\frac{v_1^{\left(0\right)}\cdot v_2^{\left(0\right)}}{v_1}+\frac{v_3^{\left(0\right)}}{v_1^{\left(0\right)}}\cdot  v_1} & \textrm{for} & v_1\leqslant v_1^{\left(0\right)}\\
& & \\
\displaystyle{v_1+\frac{v_1^{\left(0\right)}\cdot v_2^{\left(0\right)}}{v_1}+\frac{v_1^{\left(0\right)}\cdot v_3^{\left(0\right)}}{v_1}} & \textrm{for} & v_1\geqslant v_1^{\left(0\right)}.
\end{array}
\right.$$
We now have
$$V'_-\left(v_1^{\left(0\right)}\right) = 1-\frac{v_2^{\left(0\right)}}{v_1^{\left(0\right)}}+\frac{v_3^{\left(0\right)}}{v_1^{\left(0\right)}},\qquad
V'_+\left(v_1^{\left(0\right)}\right) = 1-\frac{v_2^{\left(0\right)}}{v_1^{\left(0\right)}}-\frac{v_3^{\left(0\right)}}{v_1^{\left(0\right)}}$$
and we note that under the assumption
$v_2^{\left(0\right)}\geqslant v_3^{\left(0\right)}$
the strict triangular inequalities for $v_1^{\left(0\right)},v_2^{\left(0\right)},v_3^{\left(0\right)}$ read as
$$v_2^{\left(0\right)}-v_3^{\left(0\right)} < v_1^{\left(0\right)} < v_2^{\left(0\right)}+v_3^{\left(0\right)},$$
therefore they are equivalent to the conditions
$$V'_+\left(v_1^{\left(0\right)}\right)<0<V'_-\left(v_1^{\left(0\right)}\right),$$
so they imply that $V$ has a local maximum at $v_1^{\left(0\right)}$. We must show that, conversely,
if one of the strict triangular inequalities is violated then $V$ does not have a local maximum at
$v_1^{\left(0\right)}$, which is because in this case we have
either $V'_-\left(v_1^{\left(0\right)}\right)\leqslant 0$ or $V'_+\left(v_1^{\left(0\right)}\right)\geqslant 0$, but
$V''_-\left(v_1^{\left(0\right)}\right)>0$ and $V''_+\left(v_1^{\left(0\right)}\right)>0$.

\medskip\noindent\textsc{Case II: One boundary component and two cusps}\quad
We denote by $\Sigma$ the boundary component and for $i=2,3$ by $\calA\left(T_i\right)$ the areas relative
to $\Sigma$ of the tori $T_2$ and $T_3$, with indices chosen so that $\calA\left(T_2\right)\geqslant\calA\left(T_3\right)$.
Suppose that the initial peripheral components
are $U_1\left(d_1^{\left(0\right)}\right),C_2\left(v^{\left(0\right)}_2\right),C_3\left(v_3^{\left(0\right)}\right)$, with volumes
$$v_1^{\left(0\right)}=\frac{\calA\left(\Sigma\right)}4\cdot \left(2d_1^{\left(0\right)}+\sinh\left(2d_1^{\left(0\right)}\right)\right),\qquad
v_i^{\left(0\right)}=\frac{\calA\left(T_i\right)}2\cdot e^{-2d_1^{\left(0\right)}}$$
(therefore $v_2^{(0)}\geqslant v_3^{(0)}$ by our choice of the indices).
Using $d_1$ to parameterize the deformation we have for
$d_1<d_1^{\left(0\right)}$ that the deformed total volume is given by
$\frac{\calA\left(\Sigma\right)}4\cdot \left(2d_1+\sinh\left(2d_1\right)\right)$ plus
$$\begin{array}{l}
\displaystyle{\max\left\{\frac{\calA\left(T_2\right)}2\cdot e^{-2d_1}+
\frac{\calA\left(T_3\right)}2\cdot e^{-2\left(2d_1^{\left(0\right)}-d_1\right)},\right.}\\
\displaystyle{\left.\phantom{xxxxxxxxxxxxxxxxxx}
\frac{\calA\left(T_2\right)}2\cdot e^{-2\left(2d_1^{\left(0\right)}-d_1\right)}+\frac{\calA\left(T_3\right)}2\cdot e^{-2d_1}\right\}}
\end{array}$$
and the first expression prevails thanks to the assumption $\calA\left(T_2\right)\geqslant\calA\left(T_3\right)$.
The deformed total volume is therefore
$$V\left(d_1\right)=\left\{\begin{array}{l}
\displaystyle{\frac{\calA\left(\Sigma\right)}4\cdot \left(2d_1+\sinh\left(2d_1\right)\right)+
\frac{\calA\left(T_2\right)}2\cdot e^{-2d_1}+\frac{\calA\left(T_3\right)}2\cdot e^{2d_1-4d_1^{\left(0\right)}}} \\
\phantom{xxxxxxxxxxxxxxxxxxxxxxxxxxxxxxxxxxxx}
\textrm{for}\quad d_1\leqslant d_1^{\left(0\right)}\\ \\
\displaystyle{\frac{\calA\left(\Sigma\right)}4\cdot \left(2d_1+\sinh\left(2d_1\right)\right)+
\frac{\calA\left(T_2\right)}2\cdot e^{-2d_1}+\frac{\calA\left(T_3\right)}2\cdot e^{-2d_1}} \\
\phantom{xxxxxxxxxxxxxxxxxxxxxxxxxxxxxxxxxxxx}
\textrm{for}\quad d_1\geqslant d_1^{\left(0\right)}
    \end{array}\right.$$
and we easily have
$$V'_-\left(d_1^{\left(0\right)}\right)=2\left(\widetilde{v}_1^{\left(0\right)}-v_2^{\left(0\right)}+v_3^{\left(0\right)}\right),\qquad
V'_+\left(d_1^{\left(0\right)}\right)=2\left(\widetilde{v}_1^{\left(0\right)}-v^{\left(0\right)}_2-v^{\left(0\right)}_3\right).$$
Since $V''_{\pm}\left(d_1^{\left(0\right)}\right) > 0$, the conclusion follows precisely as in Case I.

\medskip\noindent\textsc{Case III: Two boundary components and one cusp}\quad
Let the peripheral components be $C_1\left(v_1^{\left(0\right)}\right),
U_2\left(d_2^{\left(0\right)}\right),U_3\left(d_3^{\left(0\right)}\right)$, whence
$$v_1^{\left(0\right)}=\frac{\calA_{2}\left(T_1\right)}2\cdot e^{-2d_2^{\left(0\right)}}
=\frac{\calA_{3}\left(T_1\right)}2\cdot e^{-2d_3^{\left(0\right)}}.$$
For the sake of brevity we now set $f_j\left(t\right)=\frac{\calA\left(\Sigma_j\right)}4\left(2t+\sinh\left(2t\right)\right)$,
so that $v\left(U_j\left(d_j\right)\right)=f_j\left(d_j\right)$, and $f'_j\left(d_j^{\left(0\right)}\right)=2\widetilde{v}^{\left(0\right)}_j$.
We now choose indices so that $\widetilde{v}^{\left(0\right)}_2\geqslant \widetilde{v}^{\left(0\right)}_3$.
We deform the configuration using the parameter $v_1$, to do which we define the functions
$$d_j\left(v_1\right)=-\frac12\log\frac{2v_1}{\calA_{j}\left(T_1\right)},$$
noting that for $v_1\geqslant v_1^{\left(0\right)}$ the peripheral configuration is given by
$$C_1\left(v_1\right),U_2\left(d_2\left(v_1\right)\right),U_3\left(d_3\left(v_1\right)\right).$$ For $v_1<v_1^{\left(0\right)}$, on the contrary,
we have one of the following:
\begin{itemize}
\item $d_2=d_2\left(v_1\right)$ and $d_3=d_3^{\left(0\right)}+d_2^{\left(0\right)}-d_2\left(v_1\right)$,
\item $d_3=d_3\left(v_1\right)$ and $d_2=d_2^{\left(0\right)}+d_3^{\left(0\right)}-d_3\left(v_1\right)$.
\end{itemize}
The total
deformed volume for $v_1\leqslant v_1^{\left(0\right)}$ is then $v_1$ plus
$$\begin{array}{l}
\displaystyle{\max\Big\{
f_2\left(d_2\left(v_1\right)\right)+f_3\left(d_3^{\left(0\right)}+d_2^{\left(0\right)}-d_2\left(v_1\right)\right),}\\
\displaystyle{\phantom{xxxxxxxxxxxxxxxxxx}
f_2\left(d_2^{\left(0\right)}+d_3^{\left(0\right)}-d_3\left(v_1\right)\right)+f_3\left(d_3\left(v_1\right)\right)\Big\}}
\end{array}$$
and the first expression prevails thanks to our assumption, because at the point $v_1 = v_1^{(0)}$
it has the same value as the second expression but smaller first derivative.
Therefore
$$V\left(v_1\right)=
\left\{\begin{array}{lll}
v_1+f_2\left(d_2\left(v_1\right)\right)+f_3\left(d_3^{\left(0\right)}+d_2^{\left(0\right)}-d_2\left(v_1\right)\right) & \textrm{for} & v_1\leqslant v_1^{\left(0\right)}\\ & & \\
v_1+f_2\left(d_2\left(v_1\right)\right)+f_3\left(d_3\left(v_1\right)\right) & \textrm{for} & v_1\geqslant v_1^{\left(0\right)}
\end{array}\right.$$
whence
$$V'_-\left(v_1^{\left(0\right)}\right)=1-\frac{\widetilde{v}^{\left(0\right)}_2}{v_1^{\left(0\right)}}+\frac{\widetilde{v}^{\left(0\right)}_3}{v_1^{\left(0\right)}},\qquad
V'_+\left(v_1^{\left(0\right)}\right)=1-\frac{\widetilde{v}^{\left(0\right)}_2}{v_1^{\left(0\right)}}-\frac{\widetilde{v}^{\left(0\right)}_3}{v_1^{\left(0\right)}}$$
and precisely as in the previous two cases we conclude that we have a local maximum if
and only if $v_1^{\left(0\right)},\widetilde{v}^{\left(0\right)}_2,\widetilde{v}^{\left(0\right)}_3$ satisfy the strict triangular inequalities.

\medskip\noindent\textsc{Case IV: Three boundary components}\quad
Let the boundary components be $\Sigma_1,\Sigma_2,\Sigma_3$ and the initial configuration be
$U_1\left(d_1^{\left(0\right)}\right),U_2\left(d_2^{\left(0\right)}\right),U_3\left(d_3^{\left(0\right)}\right)$ with
$\widetilde{v}^{\left(0\right)}_2\geqslant \widetilde{v}^{\left(0\right)}_3$.
Setting
$$f_j\left(t\right)=\frac{\calA\left(\Sigma_j\right)}4\left(2t+\sinh\left(2t\right)\right)$$
and using $d_1$ to parameterize the deformation
we have that the deformed volume is given by
$$V\left(d_1\right)=\left\{\begin{array}{l}
f_1\left(d_1\right)+f_2\left(d_2^{\left(0\right)}+d_1^{\left(0\right)}-d_1\right)+f_3\left(d_3^{\left(0\right)}-d_1^{\left(0\right)}+d_1\right)\\
\phantom{xxxxxxxxxxxxxxxxxxxxxxxxxxxxxxxxxxxx} \textrm{for}\quad d_1\leqslant d_1^{\left(0\right)}\\ \\
f_1\left(d_1\right)+f_2\left(d_2^{\left(0\right)}+d_1^{\left(0\right)}-d_1\right)+f_3\left(d_3^{\left(0\right)}+d_1^{\left(0\right)}-d_1\right)\\
\phantom{xxxxxxxxxxxxxxxxxxxxxxxxxxxxxxxxxxxx} \textrm{for}\quad d_1\geqslant d_1^{\left(0\right)}.
\end{array}\right.$$
This gives
$$V'_-\left(d_1^{\left(0\right)}\right)=
2\left(\widetilde{v}_1^{\left(0\right)}-\widetilde{v}_2^{\left(0\right)}+\widetilde{v}_3^{\left(0\right)}\right),\qquad
V'_+\left(d_1^{\left(0\right)}\right)=2\left(\widetilde{v}_1^{\left(0\right)}-\widetilde{v}_2^{\left(0\right)}-\widetilde{v}_3^{\left(0\right)}\right)$$
and the conclusion is once again the same.

\vspace{1cm}

\noindent
Dipartimento di Matematica Applicata\\
Universit\`a di Pisa\\
Via Filippo Buonarroti, 1C\\
56127 PISA -- Italy\\
petronio@dm.unipi.it\\

\noindent
Dipartimento di Matematica\\
``Sapienza'' Universit\`a di Roma\\
Piazzale Aldo Moro, 5\\
00185 ROMA -- Italy\\
tocchet@mat.uniroma1.it

\noindent

\end{document}